\newcommand{\F}{\mathbb{F}}
\renewcommand{\u}{\underline}
\newcommand{\cN}{\mathcal{N}}
\newcommand{\cK}{\mathcal{K}}
\newcommand{\cH}{\mathcal{H}}
\newcommand{\cE}{\mathcal{E}}
\newcommand{\cC}{\mathcal{C}}
\newcommand{\cP}{\mathcal{P}}
\newcommand{\cS}{\mathcal{S}}
\newcommand{\M}{\text{Mult}}
\newtheorem{proposition}{Proposition}
\newtheorem{lemma}{Lemma}
\newtheorem{theorem}{Theorem}
\newcommand{\tbd}[1]{{{\color{black}#1}}}
\begin{document}
\title{Conical Kakeya and Nikodym Sets in Finite Fields}
\author{Audie Warren and Arne Winterhof}
\date{}

\maketitle

\begin{center}
Johann Radon Institute for Computational and Applied Mathematics,
Austrian Academy of Sciences, Altenberger Str. 69, 4040 Linz, Austria,
E-mail: \{audie.warren,arne.winterhof\}@oeaw.ac.at
\end{center}

\begin{abstract}
 A Kakeya set contains a line in each direction. Dvir proved a lower bound on the size of any Kakeya set in a finite field using the polynomial method.
 We prove analogues of Dvir's result for non-degenerate conics, that is, parabolae \tbd{and} hyperbolae \tbd{ (but not ellipses which do not have a direction)}.
 We also study so-called \tbd{conical }Nikodym sets where a small variation of the proof provides a lower bound on their sizes. \tbd{(Here ellipses are included.)}
 
 \tbd{Note that the bound on conical Kakeya sets has been known before, however, without an explicitly given constant which is included in our result and close to being best possible.}
\end{abstract}

Keywords. Kakeya set, Nikodym set, polynomial method, \tbd{method of multiplicities}, conic

\section{Introduction}
\label{intro}

A subset $\cK\subseteq\F_q^n$ of $n$-dimensional vectors over the finite field $\F_q$ of $q$ elements is called a {\em Kakeya set} in $\F_q^n$ if it contains a line in each direction. Using the polynomial method Dvir \cite[Theorem~1.5]{dv} showed that any Kakeya set in $\F_q^n$ contains
at least $c_nq^n$ elements with a constant $c_n$ depending only on $n$, see also \cite[Theorem~2.11]{gu}.

A set $\cN\subseteq\F_q^n$ is called a {\em Nikodym set} in $\F_q^n$ if for each point $x\in \F_q^n$ there is a line $L$ containing~$x$ such that $L\setminus\{x\}\subseteq \cN$. A small variation of Dvir's proof also provides that any Nikodym set in $\F_q^n$ contains
at least $c_nq^n$ elements with a constant $c_n$ depending only on $n$, see \cite[Theorem~2.9]{gu}.

 Now let $q$ be the power of an odd prime. The set of zeros $(x,y)\in \F_q^2$ of a polynomial 
 $$Q(X,Y)=AX^2+BXY+CY^2+DX+EY+F,\quad A,B,C,D,E,F\in \F_q,$$
 of degree $2$, that is $A$, $B$ and $C$ are not all zero,
 is called a {\em conic} in $\F_q^2$.
In the degenerate case, that is $Q(X,Y)$ is reducible over the algebraic closure of $\F_q$, we get a {\em pair of} (intersecting, parallel or identical) {\em lines}, a {\em point} or the {\em empty set}. We restrict ourselves to the non-degenerate case, that is, $Q(X,Y)$ is absolutely irreducible over~$\F_q$,
since the degenerate case is either trivial or can be reduced to the previously studied case of a single line.  
We may assume $C=1$, $C=0$ and $A=1$, or $A=C=0$ and $B=1$. After regular affine substitutions 
$$\left(\begin{array}{c} X\\ Y\end{array}\right)\mapsto \left(\begin{array}{c} aX+bY+e\\ cX+dY+f\end{array}\right), \quad ad\ne bc,$$ 
we are left with the following cases where $g$ is any fixed non-square in $\F_q^*$:
\begin{itemize} 
\item $A=C=0$, $B=1$:  {\em hyperbola} $\{(x,y)\in \F_q^2 : x\ne 0 \mbox{ and }y=x^{-1}\}=\{(t,t^{-1})\in \F_q^2 : t\in \F_q^*\}$.
\item $(A,C)\in \{(1,0),(0,1)\}$: {\em parabola} $\{(x,y)\in \F_q^2 : y=mx^2\}=\{(t,mt^2): t\in \F_q\}$, where $m\in\{1,g\}$.
\item $C=1$, $A\ne 0$: {\em ellipse} $\{(x,y)\in \F_q^2 : y^2=\tbd{g}x^2+k\}$, $k\in \F_q^*$.\\
\tbd{(Note that conics defined by $Y^2=X^2+k$ can be transformed into the form $XY=1$ and are hyperbolae.)}
\end{itemize}

For parabolae and hyperbolae the parametrisations 
$$(x(t),y(t))=(t,mt^2), \quad t\in \F_q,$$ 
and 
$$(x(t),y(t))=(t,t^{-1}),\quad  t\in \F_q^*,$$ respectively, are obvious. However, we
can also derive parametrisations of ellipses $(x(t),y(t))$ where $t\in\F_{q^2}$ with $t^{q+1}=1$, see Section~\ref{ell} below.

To extend the definition of a conic to a general dimension $n \ge 2$, we 
embed any conic in $\F_q^2$ into a plane in $\F_q^n$. That is
for some vectors \tbd{$\underline{a}, \underline{b},\underline{c} \in \F_q^n$} where 
\tbd{$\u{b}$ and $\u{c}$ are linearly independent}:
 \begin{itemize}
      \item A(n embedding of a) {\em hyperbola} in $\F_q^n$ is a set 
      \begin{equation}\label{H} \cH=\{\underline{a} + t\underline{b} + t^{-1}\underline{c} : t\in \F_q^*\}.\end{equation}
      \item A(n embedding of a) {\em parabola} in $\F_q^n$ is a set \begin{equation}\label{P} \cP=\{\underline{a} + t\underline{b} + t^2\underline{c} : t\in \F_q\}.\end{equation}
      \item An (embedding of an) {\em ellipse} in $\F_q^n$ is a set \begin{equation}\label{E} \cE=\{\underline{a} + x(t)\underline{b} +y(t) \underline{c}: \tbd{t \in \F_{q^2}, t^{q+1}=1}\}\end{equation} where 
       $(x(t),y(t))\tbd{\in \F_q^2}$ is given in Section~\ref{ell}.
 \end{itemize}
\tbd{(Without the linear independence of $\u{b}$ and $\u{c}$ the embedding can have fewer points than the embedded conic. Hence, a hyperbola has $q-1$ points, a parabola $q$ points and an ellipse $q+1$ points.)}
 
We give adaptations of Dvir's proof to give bounds on conical Kakeya and Nikodym sets defined as follows. 

A subset $\cN \subseteq \F_q^n$ is called a {\em conical
 Nikodym set} if for all $\underline{x} \in \F_q^n$ there is a non-degenerate conic $\cC$ of the form $(\ref{H})$, $(\ref{P})$ or $(\ref{E})$
 with $\underline{x} \in \cC$ 
and 
 $\cC\setminus \{\underline{x}\} \subseteq \cN$.
 
 In order to define conical Kakeya sets, we must decide on how to define the 'direction' of a conic
 \tbd{which can be identified with the 'point(s) at infinity' of the conic, that is, a hyperbola has two directions $\u{b}$ and $\u{c}$,
 a parabola has one direction $\u{c}$, and an ellipse has no direction. 
 }

 A subset $\cK \subseteq \F_q^n$ is called a 
{\em conical Kakeya set}
if for all $\underline{d} \in \F_q^n \setminus \{\underline{0}\}$ \\
 there exist $\underline{a}, \underline{b}, \tbd{\u{c}} \in \F_q^n$ such that $\underline{b}$ and \tbd{$\underline{c}$} are linearly independent and there is a conic contained in~$\cK$
either of the form $(\ref{H})$ \tbd{with $\underline{d}\in \{\underline{b},\underline{c}\}$} or of the form $(\ref{P})$
with \tbd{$\underline{d} = \underline{c}$}.

We prove the following Theorem.

 \begin{theorem}\label{main}
 Let $\cS \subseteq \F_q^n$ with $n\ge 2$ be a conical Kakeya or Nikodym set, where $q$ is a power of an odd prime. Then
 $$|\cS| \geq \left( \frac{q-1}{2n} \right)^n.$$
 \end{theorem}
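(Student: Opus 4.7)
The plan is to adapt Dvir's polynomial method by restricting polynomials to conic parametrisations rather than to lines. Suppose for contradiction that $|\cS|<\left(\frac{q-1}{2n}\right)^n$, and set $d=\frac{q-3}{2}$, which is a nonnegative integer since $q$ is an odd prime power. A standard dimension count on monomials in $n$ variables of degree at most $d$ gives
\[
\binom{n+d}{n} \;\geq\; \left(\frac{d+n}{n}\right)^n \;=\; \left(\frac{q-3+2n}{2n}\right)^n \;\geq\; \left(\frac{q-1}{2n}\right)^n \;>\; |\cS|,
\]
where the last inequality uses $n\geq 1$. Hence there exists a nonzero $P\in\F_q[X_1,\ldots,X_n]$ of degree $d'\leq d$ vanishing on $\cS$; write $P_{d'}$ for its homogeneous top-degree component. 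The key numerical fact throughout will be $2d'\leq q-3<q-1$.

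For the conical Kakeya case, my aim would be to deduce that $P_{d'}$ vanishes on every $\underline{d}\in\F_q^n\setminus\{\underline{0}\}$. By hypothesis there is a conic in $\cS$ with $\underline{d}$ as a direction. If it is a parabola $\underline{a}+t\underline{b}+t^2\underline{c}$ with $\underline{c}=\underline{d}$, then $P$ restricted to the parametrisation is a polynomial in $t\in\F_q$ of degree at most $2d'<q$ with $q$ zeros, hence identically zero, so its leading coefficient $P_{d'}(\underline{c})$ vanishes. If it is a hyperbola $\underline{a}+t\underline{b}+t^{-1}\underline{c}$ with $\underline{d}\in\{\underline{b},\underline{c}\}$, then $t^{d'}P(\underline{a}+t\underline{b}+t^{-1}\underline{c})$ is a polynomial of degree at most $2d'$ with $q-1>2d'$ zeros in $\F_q^*$, again identically zero, so its top and constant coefficients $P_{d'}(\underline{b})$ and $P_{d'}(\underline{c})$ both vanish. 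Either way $P_{d'}(\underline{d})=0$. Homogeneity handles $\underline{0}$, so $P_{d'}\equiv 0$ on $\F_q^n$; since $\deg P_{d'}<q$, this forces $P_{d'}$ to be the zero polynomial, contradicting $d'=\deg P$.

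For the conical Nikodym case, I would instead show directly that $P\equiv 0$ on all of $\F_q^n$. Fix $\underline{x}\in\F_q^n$ and take the conic $\cC\ni\underline{x}$ with $\cC\setminus\{\underline{x}\}\subseteq\cS$; restricting $P$ (and multiplying by $t^{d'}$ in the hyperbola and ellipse cases) yields a polynomial in the parameter $t$ of degree at most $2d'$ with at least $|\cC|-1\in\{q-1,q-2,q\}$ zeros. In every case this count exceeds $2d'\leq q-3$, so the polynomial is identically zero and in particular $P(\underline{x})=0$. Then $P$ vanishes on all of $\F_q^n$, and $\deg P<q$ forces $P=0$, a contradiction.

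The hardest part will be the ellipse portion of the Nikodym argument, because the natural parameter lives in $\F_{q^2}$. One must check, using the parametrisation from Section~\ref{ell}, that $x(t)$ and $y(t)$ are Laurent polynomials in $t$ of $t$-degrees in $[-1,1]$, for instance via $x(t)=(t+t^{-1})/2$ and an analogous formula for $y(t)$ (natural since $t^{q+1}=1$ forces $t^{-1}=t^q$). Then $t^{d'}P(\underline{a}+x(t)\underline{b}+y(t)\underline{c})$ is a polynomial in $t$ over $\F_{q^2}$ of degree at most $2d'$, and the standard root-versus-degree comparison against the $(q+1)$-st roots of unity carries through. Modulo this verification, everything else reduces to routine dimension counting and degree bookkeeping.
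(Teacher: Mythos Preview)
Your proposal is correct and follows essentially the same approach as the paper: both arguments suppose $|\cS|<\left(\frac{q-1}{2n}\right)^n$, produce a nonzero polynomial of degree at most $\frac{q-3}{2}$ vanishing on $\cS$, restrict it to the conic parametrisations (clearing denominators by $t^{d'}$ in the hyperbola and ellipse cases), and use the root-versus-degree comparison to force either $P$ (Nikodym) or $P_{d'}$ (Kakeya) to vanish on all of $\F_q^n$. Your anticipated ellipse verification is exactly what Section~\ref{ell} provides---$x(t)$ and $y(t)$ are indeed $\F_{q^2}$-linear combinations of $t$ and $t^{-1}$ (not literally $(t+t^{-1})/2$, but of that shape), so the argument goes through as you outline.
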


\tbd{For conical Kakeya sets the lower bound $c_nq^n$ with a constant depending on $n$ follows from 
\cite[Corollary~1.10]{elobta}. However, in contrast to \cite{elobta} our constant is explicit and in 
Section~\ref{multi} we use the {\it method of multiplicities} of \cite{dvkosasu} to determine a constant of the
form $c_n=c^n$ where $c$ does not depend on $n$.}

\tbd{Moreover, at the end of the paper we give an example of a subset of $\F_q^2$ of size $q+1$ which contains for each~$\u{c}$, resp., $\u{b}$ an ellipse of form $(\ref{E})$. 
Hence, it is necessary to exclude ellipses in the definition of conical Kakeya sets.}

In Section~\ref{ell} we derive a parametrisation for ellipses needed in the proof of Theorem~\ref{main}. In Section~\ref{niko} we prove Theorem~\ref{main} for conical Nikodym sets and in Section~\ref{kake} for conical Kakeya sets.  \tbd{In Section~\ref{multi} we improve the constant $c_n$ using the method of multiplicities.}
In Section~\ref{remarks} we conclude with some final remarks.

\tbd{For readers not familiar with the polynomial method we refer to the book of Guth \cite{gu} and the survey article of Tao \cite{ta} as excellent starting points.} 

 \section{Parametrisation of ellipses}
 \label{ell}
 
In this section we derive a parametrisation for ellipses, which is vital in our proof of Theorem \ref{main}
\tbd{for elliptic Nikodym sets}.

  Consider the ellipse $\cE = \{ (x,y) \in \F_q^2 : y^2 = \tbd{g}x^2 + k \}$, where \tbd{$g$} is a non-square in $\F_q^*$ and $k\in \F_q^*$. 
  By \cite[Lemma 6.24]{lini} we have 
  \begin{equation}\label{nop} |\cE|=\tbd{q+1}.
  \end{equation}
  Using analogues $s(t)$ and $r(t)$ of sine and cosine for finite fields defined below, see for example \cite[Definition~15.5]{pa}, we are able to find parametrisations of ellipses.

 \tbd{Note that a solution $z$ of $z^2=g$} is not an element of $\F_q$: $z\in \F_{q^2}\setminus\F_q$.
 \tbd{Let $(u,v)\in \F_q^2$ be any fixed solution of} 
 $$v^2=\tbd{g}u^2+k$$
 \tbd{which exists by $(\ref{nop})$.} \tbd{Then verify that}
 \begin{eqnarray*} s(t)&=&2^{-1}z(t-t^{-1}),\\
r(t)&=&2^{-1}(t+t^{-1}),
\end{eqnarray*}
\tbd{is a solution of $s(t)^2=g(r(t)^2-1)$.}
 It can be \tbd{easily checked} that
 $$\cE=\{(x(t),y(t)) : t\in \F_{q^2}^*, t^{q+1}=1\}$$
 with 
\begin{eqnarray*}
  x(t) &=& \tbd{g}^{-1}vs(t)+ur(t),\\
  y(t) &=& us(t)+vr(t).
 \end{eqnarray*}
 Since $r(t)^q=r(t)$ and $s(t)^q=s(t)$ (using $z^q=z\tbd{g}^{(q-1)/2}=-z$ because $\tbd{g}$ is a non-square in $\F_q^*$) we have $(r(t),s(t))\in \F_q^2$, so that $(x(t),y(t)) \in \F_q^2$.

 \section{Conical Nikodym sets}
 \label{niko}

 In this section we prove Theorem~\ref{main} for conical Nikodym sets. 
    
  \begin{proposition} 
  Let $\cN \subseteq \F_q^n$ with $n\ge 2$ and $q$ the power of an odd prime be a conical Nikodym set. Then we have
  $$|\cN| \geq \left( \frac{q-1}{2n} \right)^n.$$
  \end{proposition}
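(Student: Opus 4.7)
The plan is to adapt Dvir's polynomial method to the conical setting: assume for contradiction that $|\cN| < ((q-1)/(2n))^n$, construct a nonzero low-degree polynomial vanishing on $\cN$, and then use the Nikodym condition together with a univariate zero-count on each parametrised conic to force this polynomial to vanish on all of $\F_q^n$, contradicting a degree bound.

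First I would set $d=(q-3)/2$, which is an integer because $q$ is odd. The space of polynomials over $\F_q$ in $n$ variables of total degree at most $d$ has dimension $\binom{d+n}{n}$, and the standard bound $\binom{d+n}{n}\ge((d+n)/n)^n=((q-3+2n)/(2n))^n$ exceeds $((q-1)/(2n))^n$ whenever $n\ge 2$. Since our assumed upper bound on $|\cN|$ is therefore strictly smaller than the dimension of this polynomial space, the evaluation map has a nontrivial kernel and yields a nonzero $P\in\F_q[X_1,\dots,X_n]$ of degree at most $d$ vanishing on $\cN$.

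Next, for each $\u{x}\in\F_q^n$ I would pick the conic $\cC$ through $\u{x}$ with $\cC\setminus\{\u{x}\}\subseteq\cN$ supplied by the Nikodym hypothesis, substitute the parametrisation into $P$, and multiply by $t^d$ in the hyperbola and ellipse cases to clear the $t^{-1}$ arising in the parametrisation (for the ellipse using the Laurent form $x(t),y(t)\in\F_{q^2}[t,t^{-1}]$ with support only in $t^{\pm 1}$ derived in Section~\ref{ell}). This yields a univariate polynomial $Q(t)$ of degree at most $2d=q-3$. It vanishes at the $q-1$ parameter values of $\cP\setminus\{\u{x}\}$, the $q-2$ values of $\cH\setminus\{\u{x}\}$, or the $q$ values of $\cE\setminus\{\u{x}\}$ (using $|\cP|=q$, $|\cH|=q-1$, $|\cE|=q+1$), and in every case this count strictly exceeds $q-3$, so $Q\equiv 0$. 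Evaluating at the parameter value of $\u{x}$ and cancelling the nonzero factor $t^d$ where present gives $P(\u{x})=0$.

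Since $\u{x}$ was arbitrary, $P$ vanishes on all of $\F_q^n$; combined with $\deg P\le(q-3)/2<q$, so that the degree in each individual variable is below $q$, this forces $P\equiv 0$, the desired contradiction. The main point I expect to need care with is the ellipse case, since the parametrising $t$ lives in $\F_{q^2}$ and $Q$ a priori has coefficients in $\F_{q^2}$; but the zero-count is unaffected, as a polynomial of degree $<q$ over $\F_{q^2}$ with at least $q$ zeros in $\F_{q^2}$ still vanishes identically. The binding constraint is the hyperbola case, which imposes $2d\le q-3$ and thereby produces the constant $1/(2n)$ in the statement.
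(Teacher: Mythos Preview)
Your proposal is correct and follows essentially the same argument as the paper: both assume $|\cN|$ is small, produce a nonzero polynomial of degree at most $(q-3)/2$ vanishing on $\cN$, restrict it to the parametrised conic through an arbitrary point $\u{x}$ (clearing denominators with a power of $t$ in the hyperbola and ellipse cases), and use the resulting univariate degree bound $q-3$ against the $q-1$, $q-2$, or $q$ zeros to conclude $P(\u{x})=0$ everywhere. The only cosmetic differences are that you derive the existence of $P$ from an explicit dimension count $\binom{d+n}{n}\ge((d+n)/n)^n$ rather than citing a lemma, and you multiply by the fixed power $t^d$ instead of $t^{\deg P}$; neither affects the argument.
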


  \begin{proof} 
 Suppose $|\cN| < \left(\frac{q-1}{2n}\right)^n$. By \cite[Lemma~2.4]{gu}, there is a non-zero polynomial $f$ with $f(\tbd{\u{s}}) = 0$ for all $\tbd{\u{s}} \in \cN$, and $\deg(f) \leq n|\cN|^{1/n}\le \frac{q-3}{2}$. Take any $\underline{x} \in \F_q^n$. As $\cN$ is conical Nikodym, there exists a conic $\cC$ of the form $(\ref{H})$, $(\ref{P})$ or $(\ref{E})$ with $\underline{x} \in \cC$ and $\cC \setminus \{ \underline{x} \} \subseteq \cN$. We split into cases depending on the form of the conic $\cC$.
  
  Firstly assume the conic $\cC$ is a parabola $\cP$. Parametrise this parabola as $$\cP = \{\tbd{\underline{a}} + t \underline{b} + t^2 \underline{c} : t \in \F_q \}.$$
 Applying these points to the polynomial $f$, we define $F(t) = f(\underline{a} + t \underline{b} + t^2 \underline{c})$ a univariate polynomial in $t$ of degree $\deg(F) \leq q-3$. We also know that it has $q-1$ zeros corresponding to the points of the parabola lying in $\cN$, and thus must be zero on the whole parabola, in particular $f(\underline{x}) = 0$.
 
 Secondly assume $\cC$ is a hyperbola $\cH$. Parametrise this hyperbola as $$\cH = \{\underline{a} + t \underline{b} + t^{-1} \underline{c}: t \in \F_q^* \}.$$
 Applying these points to the polynomial $f$, we define $F(t) = t^{\deg(f)}f(\underline{a} + t \underline{b} + t^{-1} \underline{c})$ a univariate polynomial in $t$ of degree $\deg(F) \leq q-3$. We also know that it has $q-2$ zeros corresponding to the points of the hyperbola $\cH$ lying in $\cN$, and thus must be zero on the whole hyperbola. Again we find $f(\underline{x}) = 0$.

 Thirdly we assume that $\cC$ is an ellipse $\cE$. The number of points on this ellipse \tbd{is $q+1$}. We use our parametrisation of an ellipse; it has form 
 $$\cE = \{ \underline{a} +  \underline{b}x(t) +\underline{c}y(t) : \tbd{t\in \F_{q^2}^*,t^{q+1}=1}\}$$
 for some appropriate choice of $\underline{a}$, $\underline{b}$ and $\underline{c}$, and $(x(t),y(t))$ are given in Section \ref{ell}.  We consider the polynomial $F(t) = t^{\deg(f)}f(\underline{a} + \underline{b}x(t) +  \underline{c}y(t))$. \tbd{This} polynomial is univariate in $t$ of degree $\deg(F) \leq q-3$. 
 We know that it has \tbd{$q$} zeros \tbd{(in $\F_{q^2}$)} corresponding to the points of the ellipse $\cE$ lying in $\cN$, and thus must be zero on the whole ellipse. We again find that $f(\underline{x}) = 0$.
 
 In all three cases we found that $f(\underline{x}) = 0$. As $\underline{x}$ was chosen arbitrarily we conclude that $f(\underline{x})=0$ for all $\underline{x}\in \F_q^n$. As $\deg(f) \leq \frac{q-3}{2}$ the polynomial $f$ must be the zero polynomial, a contradiction.
   \end{proof}

\section{Conical Kakeya sets}
\label{kake}

In this section we prove Theorem~\ref{main} for conical Kakeya sets.

\begin{proposition}
Let $\cK \subseteq \F_q^n$ with $n\ge 2$ and $q$ the power of an odd prime be a conical Kakeya set. Then
$$|\cK| \geq \left(\frac{q-1}{2n}\right)^n.$$
\end{proposition}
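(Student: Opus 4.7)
The plan is to mimic the Nikodym argument with one key twist: instead of recovering $f(\underline{x})=0$ for each point $\underline{x}\in\F_q^n$, I will recover the direction $\underline{d}$ itself from the leading homogeneous part of $f$. Assume for contradiction that $|\cK|<\left(\frac{q-1}{2n}\right)^n$. Then by \cite[Lemma~2.4]{gu} there is a non-zero polynomial $f$ with $f(\u{s})=0$ for every $\u{s}\in\cK$ and $d:=\deg(f)\le n|\cK|^{1/n}<(q-1)/2$, so $d\le(q-3)/2$. Decompose $f=f_0+f_1+\cdots+f_d$ into homogeneous components, with $f_d\ne 0$. The target is to prove $f_d(\u{d})=0$ for every $\u{d}\in\F_q^n\setminus\{\u{0}\}$.

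To realise this, I fix a direction $\u{d}$ and use the conical Kakeya hypothesis to produce a parabola or hyperbola in $\cK$ having $\u{d}$ as a direction vector. In the parabola case $\cK\supseteq\{\u{a}+t\u{b}+t^2\u{d}:t\in\F_q\}$, I set $F(t)=f(\u{a}+t\u{b}+t^2\u{d})$, a univariate polynomial of degree at most $2d\le q-3<q$ with $q$ zeros, hence $F\equiv 0$. Expanding, the only contribution to $t^{2d}$ comes from $f_d$ with the $t^2\u{d}$ summand selected in every factor, producing the coefficient $f_d(\u{d})$; lower-degree components $f_j$ with $j<d$ only reach powers $t^k$ with $k\le 2j<2d$. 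So $f_d(\u{d})=0$. In the hyperbola case $\cK\supseteq\{\u{a}+t\u{b}+t^{-1}\u{c}:t\in\F_q^*\}$ with $\u{d}\in\{\u{b},\u{c}\}$, I set $G(t)=t^d f(\u{a}+t\u{b}+t^{-1}\u{c})$, which is a polynomial of degree at most $2d\le q-3<q-1$ with $q-1$ zeros in $\F_q^*$, so $G\equiv 0$. A parallel coefficient extraction identifies the $t^{2d}$-coefficient of $G$ with $f_d(\u{b})$ and the $t^0$-coefficient of $G$ with $f_d(\u{c})$, giving $f_d(\u{d})=0$ in either subcase.

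Combining the two cases, $f_d$ vanishes at every non-zero point of $\F_q^n$, and also at $\u{0}$ by homogeneity of positive degree, so $f_d$ vanishes on all of $\F_q^n$. Since $\deg(f_d)=d\le(q-3)/2<q$, the Schwartz--Zippel bound forces $f_d\equiv 0$, contradicting $f_d\ne 0$.

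The only non-routine part is the clean identification of the extreme coefficients of $F$ and $G$ with $f_d$ evaluated at the correct direction. For the parabola this is immediate from $2j<2d$. For the hyperbola one has to observe that the Laurent expansion of $f_j(\u{a}+t\u{b}+t^{-1}\u{c})$ only involves powers of $t$ strictly inside $(-d,d)$ for $j<d$, so that multiplying by $t^d$ isolates $f_d(\u{b})$ at $t^{2d}$ and $f_d(\u{c})$ at $t^0$. Once this bookkeeping is in place the argument closes, the bound $2d\le q-3$ being comfortably below both thresholds $q$ and $q-1$ needed to kill the univariate polynomials.
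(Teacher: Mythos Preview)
Your proof is correct and follows essentially the same approach as the paper: both assume a low-degree annihilating polynomial, isolate its top homogeneous part $f_d$, and use the parabola/hyperbola parametrisations to force $f_d$ to vanish on every direction via the extremal coefficients of a univariate specialisation. The only cosmetic difference is that in the hyperbola case you extract both the $t^{2d}$- and $t^0$-coefficients of $G$ to handle $\u{d}\in\{\u{b},\u{c}\}$ simultaneously, whereas the paper reduces to a single case by the relabelling $t\mapsto t^{-1}$ and reads off only the constant term.
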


\begin{proof}
Suppose that $|\cK| < \left( \frac{q-1}{2n} \right)^n$. By \cite[Lemma~2.4]{gu} there exists $f$ a non-zero polynomial with $f(\tbd{\u{s}})=0 \ $ for all $\tbd{\u{s}} \in \cK$, with degree $d  \leq \frac{q-3}{2}$. We split this polynomial into a sum of its greatest degree part and the lower degree terms as 
 $$f = f_d + g, \quad \deg(f_d) = d, \quad \deg(g) < d, \quad f_d(x_1,\ldots,x_n) = \sum_{i_1 + \ldots+i_n = d}e_{i_1,\ldots,i_n}x_1^{i_1}x_2^{i_2}\cdots x_n^{i_n}.$$
Note that as $f_d$ is homogeneous, $f_d(\underline{0})=0$. Take any $\underline{x} \in \F_q^n \setminus \{ \underline{0} \}$. As $\cK$ is conical Kakeya, there exists some conic $\cC$ of the form $(\ref{H})$ 
\tbd{or} $(\ref{P})$ with $\underline{x}$ appearing as $\underline{c}$ \tbd{for parabolae} 
\tbd{and} $\underline{b}$ or $\underline{c}$ for hyperbolae from Section \ref{intro}. We split into \tbd{two} cases depending on which type of conic $\cC$ defines. 

First assume $\cC$ is a parabola $\cP$. It has parametrisation
$$\cP = \{ \underline{a} + t\underline{b} + t^2\underline{c}:t \in \F_q \}. $$
We consider the polynomial $F(t) \tbd{=} f(\underline{a} + t\underline{b} + t^2\underline{c})$, which is univariate in $t$ of degree $2d$. Since $f$ is zero on $\cK$, $F(t) = 0$ for all $t\in \F_q$. Then as $\deg(F) = 2d < q$, $F$ is identically zero. We note that the coefficient of $t^{2d}$ in $F(t)$ is the coefficient of $t^{2d}$ in $f_d(\underline{a} + \underline{b}t + \underline{c}t^2)$:
 \begin{align*}
     f_d(\underline{a} + \underline{b}t + \underline{c}t^2) &= f_d(a_1+b_1t + c_1t^2,\ldots,a_n+b_nt + c_nt^2) \\
     & = \sum_{i_1 + \ldots+i_n = d}e_{i_1,\ldots,i_n}(a_1+b_1t + c_1t^2)^{i_1}(a_2+b_2t  +c_2t^2)^{i_2}\cdots (a_n+b_nt + c_nt^2)^{i_n}.
 \end{align*}
 Upon multiplying out to find the coefficient of $t^{2d}$ we have
 \begin{align*}
     \sum_{i_1 + \ldots+i_n = d}e_{i_1,\ldots,i_n}c_1^{i_1}c_2^{i_2}\cdots c_n^{i_n}t^{2d} & = t^{2d}  \sum_{i_1 + \ldots+i_n = d}e_{i_1,\ldots,i_n}c_1^{i_1}c_2^{i_2}\cdots c_n^{i_n}
     \\ & = t^{2d} f_d(\underline{c})
 \end{align*}
and thus as $F$ is identically zero, $f_d(\underline{c}) = f_d(\underline{x}) =0$.

Secondly we assume $\cC$ is a hyperbola $\cH$. \tbd{Up to the relabelling of $t \rightarrow t^{-1}$, we may assume it has parametrisation} 
$$\cH=\{\underline{a} + t\underline{b} + t^{-1}\underline{c} : t\in \F_q^*\}.$$
Consider the univariate polynomial $F(t) = t^df(\underline{a} + t \underline{b}+t^{-1} \underline{c})$, which is of degree $\deg(F) = 2d < q-1$. 
The polynomial $f$ vanishes on $\cK$, and so $F(t) =0$ for all $t \in \F_q^*$. As $\deg(F) < q-1$ with $F$ having at least $q-1$ zeros, we have that $F(t)$ is identically zero, in particular its constant term is zero. We calculate the constant term of $F(t)$; it is precisely the coefficient of $t^{-d}$ in $f_d(\underline{a}+ \underline{b}t + \underline{c}t^{-1} )$,
\begin{align*}
    f_d(\underline{a} + \underline{b}t + \underline{c}t^{-1} ) & = \sum_{i_1 +\ldots+i_n = d}e_{i_1,\ldots,i_n}(a_1+b_1t + c_1t^{-1})^{i_1}\cdots (a_n+b_nt + c_nt^{-1})^{i_n}
\end{align*}
so the coefficient of $t^{-d}$ is $$ \sum_{i_1 +\ldots+i_n = d}e_{i_1,\ldots,i_n}c_1^{i_1}\cdots c_n^{i_n} = f_d(\underline{c}).$$ Thus $f_d(\underline{c})= f_d(\underline{x})=0$.

In \tbd{both} cases we have $f_d(\underline{x})=0$. Since we already knew that $f_d(\underline{0})=0$, we have $f_d(\underline{x}) = 0$ for all $\underline{x} \in \F_q^n$. As $d < q$, $f_d$ is identically zero, which is a contradiction.
\end{proof}

\tbd{\section{Improvements via the method of multiplicities}\label{multi}

The 'method of multiplicities' was used in \cite{dvkosasu}, see also \cite{ta}, to prove a constant of $2^{-n}$ for line Kakeya sets. This involves Hasse derivatives and exploiting polynomials which vanish to a high multiplicity on a particular set. 

Let $\u{x} = (x_1,...,x_n)$ and $f \in \F_q[\underline{x}]$. For a vector $\underline{i}=(i_1,\ldots,i_n) \in \mathbb{N}^n$, the $\underline{i}$'th {\it Hasse derivative} of $f$, which we denote $f^{\u{i}}(\u{x})$, is the coefficient of $\u{y}^{\u{i}}$ in the polynomial $f(\u{x} + \u{y})$, where $\u{y}^{\u{i}}$ is the monomial $y_1^{i_1}y_2^{i_2}...y_n^{i_n}$.

For $f \in \F_q[\u{x}]$ and $\u{a} \in \F_q^n$, the {\it multiplicity} of $f$ at $\u{a}$, denoted $\M(f,\u{a})$, is the largest integer $M$ such that for all vectors $\u{i} \in \mathbb{N}^n$ of weight $wt(\u{i}) < M$, the $\u{i}$'th Hasse derivative of $f$ is zero at $\u{a}$, that is, $f^{\u{i}}(\u{a}) = 0$,
where $wt(\u{i})=i_1+\ldots+i_n$.

We make use of five results relating to multiplicities and Hasse derivatives. These results, with proofs, can be found in \cite{dvkosasu}, see also \cite{ta}. 

\begin{lemma} \label{commute}
Hasse derivatives 'commute' with taking homogeneous parts of highest degree. That is, for $f \in \F_q[\u{x}]$ of total degree $d$, letting $f_d$ denote the homogeneous part of $f$ of degree $d$, we have
$$(f_d)^{\u{i}}(\u{x}) = (f^{\u{i}})_{d'}(\u{x})$$
where $d'\le d-wt(\u{i})$ is the degree of $f^{\u{i}}$.
\end{lemma}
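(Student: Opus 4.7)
The plan is to reduce the claim to a straightforward bookkeeping of degrees after expanding $f(\u{x}+\u{y})$. First I would write $f = f_d + g$ with $\deg(g) < d$ as in Section~\ref{kake}, and expand
\[ f(\u{x}+\u{y}) = f_d(\u{x}+\u{y}) + g(\u{x}+\u{y}) \]
using the definition of the Hasse derivative, namely that $f^{\u{i}}(\u{x})$ is by definition the coefficient of $\u{y}^{\u{i}}$ on the left-hand side.

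The key observation is that $f_d(\u{x}+\u{y})$, viewed as a polynomial in the $2n$ variables $(\u{x},\u{y})$, is homogeneous of total degree $d$, because $f_d$ is homogeneous of degree $d$ and the substitution $\u{x} \mapsto \u{x}+\u{y}$ preserves total degree. Hence when we isolate the coefficient of $\u{y}^{\u{i}}$ in $f_d(\u{x}+\u{y})$, each contributing monomial has total degree $d$ in $(\u{x},\u{y})$ and carries degree exactly $wt(\u{i})$ in $\u{y}$, so the coefficient, which is $(f_d)^{\u{i}}(\u{x})$, is homogeneous of degree $d - wt(\u{i})$ in $\u{x}$ (or zero, if $wt(\u{i}) > d$). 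By the same reasoning, every monomial of $g(\u{x}+\u{y})$ has total degree strictly less than $d$, so the coefficient of $\u{y}^{\u{i}}$ in $g(\u{x}+\u{y})$, which is $g^{\u{i}}(\u{x})$, has degree strictly less than $d - wt(\u{i})$ in $\u{x}$.

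Adding these, the highest-degree homogeneous component of $f^{\u{i}}(\u{x}) = (f_d)^{\u{i}}(\u{x}) + g^{\u{i}}(\u{x})$ is exactly $(f_d)^{\u{i}}(\u{x})$, and its degree is the value $d'$ referred to in the statement (with the convention that if $wt(\u{i}) > d$ then both sides are zero). This yields $(f_d)^{\u{i}}(\u{x}) = (f^{\u{i}})_{d'}(\u{x})$, as required.

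The only mild obstacle is making the degree accounting for $f_d(\u{x}+\u{y})$ rigorous, which amounts to observing that if $\u{x}^{\u{j}}$ is a monomial of $f_d$ with $wt(\u{j})=d$, then $(\u{x}+\u{y})^{\u{j}} = \prod_k (x_k+y_k)^{j_k}$ expands into monomials each of total bidegree $wt(\u{j}) = d$ in $(\u{x},\u{y})$; a one-line multinomial expansion suffices. No deep input is needed beyond the definition of the Hasse derivative and the fact that substitution $\u{x}\mapsto \u{x}+\u{y}$ preserves the grading by total degree.
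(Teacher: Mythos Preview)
The paper does not give its own proof of this lemma; it simply cites \cite{dvkosasu} and \cite{ta}. Your argument is exactly the standard one found there: split $f=f_d+g$, observe that $f_d(\u{x}+\u{y})$ is homogeneous of total degree $d$ in the $2n$ variables $(\u{x},\u{y})$, and read off that the coefficient of $\u{y}^{\u{i}}$ is homogeneous of degree $d-wt(\u{i})$ in $\u{x}$ while the contribution from $g$ has strictly smaller degree. So your approach matches the intended one.

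One small point worth tightening: you implicitly assume $(f_d)^{\u{i}}\ne 0$ when you say ``the highest-degree homogeneous component of $f^{\u{i}}$ is exactly $(f_d)^{\u{i}}$.'' It can happen that $(f_d)^{\u{i}}=0$ even when $wt(\u{i})\le d$ (e.g.\ $f_d=x_1^d$ and $\u{i}=(0,1,0,\ldots,0)$), in which case the leading part of $f^{\u{i}}$ comes from $g^{\u{i}}$ and need not vanish, so the identity as literally stated in the lemma fails. In \cite{dvkosasu} the result is phrased with the caveat that $(f_d)^{\u{i}}$ is either zero or equals the leading homogeneous part of $f^{\u{i}}$; your argument proves precisely that version, and that is all the applications in Section~\ref{multi} actually use.
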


\begin{lemma} \label{weight}
Taking $\u{i}$'th Hasse derivatives reduces multiplicity by at most the weight of $\u{i}$. That is,
$$\M(f^{\u{i}},\u{a}) \geq \M(f,\u{a}) - wt(\u{i}).$$
\end{lemma}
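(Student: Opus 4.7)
The plan is to reduce the lemma to a single composition identity for iterated Hasse derivatives, and then invoke it mechanically. Specifically, I would first establish
$$(f^{\u{i}})^{\u{j}}(\u{x}) \;=\; \binom{\u{i}+\u{j}}{\u{i}}\, f^{\u{i}+\u{j}}(\u{x}), \qquad (\star)$$
where $\binom{\u{i}+\u{j}}{\u{i}} = \prod_{\ell=1}^{n} \binom{i_\ell+j_\ell}{i_\ell}$ is interpreted as an element of $\F_q$. Once $(\star)$ is in hand, the lemma becomes a one-liner: setting $M = \M(f,\u{a})$, any multi-index $\u{j}$ with $wt(\u{j}) < M - wt(\u{i})$ satisfies $wt(\u{i}+\u{j}) < M$, so $f^{\u{i}+\u{j}}(\u{a}) = 0$ by definition of multiplicity, and hence $(f^{\u{i}})^{\u{j}}(\u{a}) = 0$ via $(\star)$. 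Running over all such $\u{j}$ gives $\M(f^{\u{i}},\u{a}) \geq M - wt(\u{i})$.

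To derive $(\star)$, my strategy is to compute the formal expression $f(\u{x}+\u{y}+\u{z})$ in two different orders, treating $\u{y}$ and $\u{z}$ as independent blocks of indeterminates. Viewing $\u{y}+\u{z}$ as a single perturbation of $\u{x}$ and expanding each factor $(y_\ell+z_\ell)^{k_\ell}$ by the binomial theorem gives
$$f(\u{x}+\u{y}+\u{z}) \;=\; \sum_{\u{k}} f^{\u{k}}(\u{x})(\u{y}+\u{z})^{\u{k}} \;=\; \sum_{\u{i},\u{j}} \binom{\u{i}+\u{j}}{\u{i}} f^{\u{i}+\u{j}}(\u{x})\, \u{y}^{\u{i}} \u{z}^{\u{j}}.$$
On the other hand, perturbing first by $\u{y}$ and then Taylor-expanding each coefficient $f^{\u{i}}(\u{x}+\u{z})$ in powers of $\u{z}$ gives
$$f(\u{x}+\u{y}+\u{z}) \;=\; \sum_{\u{i}} f^{\u{i}}(\u{x}+\u{z})\, \u{y}^{\u{i}} \;=\; \sum_{\u{i},\u{j}} (f^{\u{i}})^{\u{j}}(\u{x})\, \u{y}^{\u{i}} \u{z}^{\u{j}}.$$
Matching the coefficient of the monomial $\u{y}^{\u{i}} \u{z}^{\u{j}}$ in the two expansions then yields $(\star)$.

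The main obstacle is essentially only notational: one has to be comfortable manipulating multi-index expansions, in particular that $(\u{y}+\u{z})^{\u{k}}$ factors componentwise so that the multi-index binomial coefficient appears correctly. Characteristic plays no role, since $(\star)$ is an identity of polynomials in $\F_q[\u{x},\u{y},\u{z}]$; even if $\binom{\u{i}+\u{j}}{\u{i}}$ happens to vanish in $\F_q$, the conclusion $(f^{\u{i}})^{\u{j}}(\u{a}) = 0$ that we need still follows from $f^{\u{i}+\u{j}}(\u{a}) = 0$.
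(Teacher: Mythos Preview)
Your proof is correct and is essentially the standard argument: the paper does not prove this lemma itself but defers to \cite{dvkosasu}, where exactly this composition identity $(f^{\u{i}})^{\u{j}} = \binom{\u{i}+\u{j}}{\u{i}} f^{\u{i}+\u{j}}$ is derived (via the same two-way expansion of $f(\u{x}+\u{y}+\u{z})$) and the multiplicity bound is deduced from it in the way you describe.
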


\begin{lemma} \label{comp}
Multiplicities of compositions of polynomials $f(g(\u{x}))$ at $\u{a}$ is at least the multiplicity of $f$ at $g(\u{a})$. That is,
$$\M(f(g(\u{x})), \u{a}) \geq \M(f(\u{x}), g(\u{a})).$$
\end{lemma}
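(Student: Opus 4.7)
The plan is to compute the Hasse expansion of $f\circ g$ at $\u{a}$ directly from the definition, namely that $(f\circ g)^{\u{i}}(\u{a})$ is the coefficient of $\u{y}^{\u{i}}$ in $f(g(\u{a}+\u{y}))$. Set $\u{b} := g(\u{a})$ and $M := \M(f,\u{b})$, so that $f^{\u{j}}(\u{b}) = 0$ for every multi-index $\u{j}$ with $wt(\u{j}) < M$. The goal then reduces to showing that the coefficient of $\u{y}^{\u{i}}$ in $f(g(\u{a}+\u{y}))$ vanishes whenever $wt(\u{i}) < M$.

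The first step is to introduce the auxiliary polynomial $h(\u{y}) := g(\u{a}+\u{y}) - g(\u{a})$, whose crucial feature is $h(\u{0}) = \u{0}$: every monomial appearing in each coordinate polynomial $h_k(\u{y})$ has $\u{y}$-degree at least one. Expanding $f$ around $\u{b}$ via its Hasse expansion and substituting $\u{z} = h(\u{y})$ gives
$$f(g(\u{a}+\u{y})) = f(\u{b}+h(\u{y})) = \sum_{\u{j}} f^{\u{j}}(\u{b})\, h(\u{y})^{\u{j}} = \sum_{wt(\u{j})\ge M} f^{\u{j}}(\u{b})\, h(\u{y})^{\u{j}},$$
where the last equality drops the terms killed by the multiplicity hypothesis.

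The second step is a simple valuation count: since each coordinate $h_k(\u{y})$ has $\u{y}$-valuation at least one, the product $h(\u{y})^{\u{j}} = \prod_k h_k(\u{y})^{j_k}$ has $\u{y}$-valuation at least $\sum_k j_k = wt(\u{j})$. In the surviving sum this is at least $M$, so every monomial $\u{y}^{\u{i}}$ with $wt(\u{i}) < M$ has coefficient zero in $f(g(\u{a}+\u{y}))$. This gives $\M(f\circ g, \u{a}) \ge M = \M(f,\u{b})$, as required.

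I do not anticipate a serious obstacle; the only point requiring care is that the whole computation must be read as a formal identity between polynomials in $\u{y}$, not as an application of the ordinary chain rule from calculus, since we are in positive characteristic. Working with Hasse rather than ordinary derivatives is precisely what makes this formal substitution legitimate, and it is why the lemma holds over arbitrary $\F_q$.
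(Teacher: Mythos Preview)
Your proof is correct. The paper does not actually supply its own proof of this lemma but instead cites \cite{dvkosasu}; your argument---expand $f$ at $g(\u{a})$ via Hasse derivatives, substitute $h(\u{y})=g(\u{a}+\u{y})-g(\u{a})$, and use that $h(\u{0})=\u{0}$ forces every surviving monomial to have weight at least $M$---is precisely the standard proof found in that reference.
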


\begin{lemma}[Vanishing lemma for multiplicities] \label{vanish}
Let $f \in \F_q[\u{x}]$ be of degree $d$. Then $$\sum_{a \in \F_q^n} \M(f, \u{a}) > dq^{n-1} \implies f \ \text{is the zero polynomial.}$$
\end{lemma}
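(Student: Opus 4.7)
The plan is to prove the contrapositive: every nonzero $f \in \F_q[\u{x}]$ of total degree $d$ satisfies $\sum_{\u{a}\in\F_q^n}\M(f,\u{a}) \leq dq^{n-1}$. I would argue by induction on the number of variables $n$. The base case $n=1$ is immediate: for a univariate polynomial the Hasse derivatives $f^{(j)}(a)$ are just the Taylor coefficients of $f$ around $a$, so $\M(f,a)$ equals the classical order of vanishing, and the inequality reduces to the familiar degree bound on roots counted with multiplicity.

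For the inductive step I would slice $\F_q^n$ into the $q^{n-1}$ axis-parallel lines $L_{\u{a}'} = \{\u{a}'\}\times \F_q$ indexed by $\u{a}' \in \F_q^{n-1}$, and restrict. Writing $h_{\u{a}'}(t) = f(\u{a}',t)$, specialising $\u{y}' = \u{0}$ in the Hasse--Taylor expansion $f((\u{a}',a_n)+\u{y}) = \sum_{\u{i}} f^{\u{i}}(\u{a}',a_n)\,\u{y}^{\u{i}}$ identifies the coefficient of $y_n^j$ in $h_{\u{a}'}(a_n + y_n)$ with $f^{(\u{0}',j)}(\u{a}',a_n)$. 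This yields the key comparison
$$\M(f,(\u{a}',a_n)) \leq \M(h_{\u{a}'},a_n)$$
on every line for which $h_{\u{a}'}\not\equiv 0$, so the base case applied to the restriction gives $\sum_{a_n}\M(f,(\u{a}',a_n)) \leq \deg(h_{\u{a}'}) \leq d$.

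The main obstacle is the set $Z = \{\u{a}' : h_{\u{a}'}\equiv 0\}$ of ``bad'' slices, on which the comparison above is vacuous. The cleanest way around it is a preliminary linear change of coordinates: arrange by an invertible substitution that the degree-$d$ homogeneous part $f_d$ is nonzero at $\u{e}_n$, so that the coefficient of $x_n^d$ in $f$ becomes a nonzero constant. Such a direction exists whenever $d < q$ by an elementary Schwartz--Zippel count on $f_d$, and $d < q$ is precisely the regime in which the lemma's hypothesis $\sum \M > dq^{n-1}$ is not forced by trivialities. After this normalisation each $h_{\u{a}'}$ has leading term of degree $d$ with a fixed nonzero coefficient, so $Z = \emptyset$, and summing the univariate bound across all $q^{n-1}$ axis-parallel lines yields exactly the required total $dq^{n-1}$.
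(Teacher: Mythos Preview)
The paper does not supply its own proof of this lemma; it simply cites \cite{dvkosasu}. Comparing to the argument there, your outline is correct up through the line-restriction inequality $\M(f,(\u{a}',a_n))\le \M(h_{\u{a}'},a_n)$, but your handling of bad slices---the claim that a direction with $f_d(\u{v})\neq 0$ exists whenever $d<q$, and that $d\ge q$ is the regime where the hypothesis ``is forced by trivialities''---is a genuine gap, not a formality.

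When $d\ge q+1$ the top form $f_d$ can vanish identically on $\F_q^n$ (e.g.\ $x_2\prod_{\alpha\in\F_q}(x_1-\alpha x_2)$ for $n=2$), so no linear change of coordinates makes the $x_n^d$-coefficient a unit and your bad set $Z$ may be all of $\F_q^{n-1}$. Nor is the regime $d\ge q$ vacuous: for $f=(x_1^q-x_1)^k$ one has $d=kq$ and $\sum_{\u{a}\in\F_q^n}\M(f,\u{a})=kq^n=dq^{n-1}$, so the bound is \emph{sharp} for arbitrarily large $d$ and there is no trivial slack to exploit. This matters here, since every invocation of Lemma~\ref{vanish} in Section~\ref{multi} has degree of order $lq$ with $l\to\infty$. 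The fix used in \cite{dvkosasu} is to treat bad slices inductively rather than eliminate them: write $f=\sum_{j\le t}f_j(x_1,\dots,x_{n-1})x_n^j$ with $f_t\not\equiv 0$, set $m(\u{a}')=\M(f_t,\u{a}')$, and for each $\u{a}'$ pick $\u{i}'$ of weight $m(\u{a}')$ with $(f_t)^{\u{i}'}(\u{a}')\neq 0$; then the restriction of $f^{(\u{i}',0)}$ to the line over $\u{a}'$ has degree exactly $t$, whence (via Lemma~\ref{weight}) $\sum_{a_n}\M(f,(\u{a}',a_n))\le t+q\,m(\u{a}')$, and summing over $\u{a}'$ together with the inductive bound $\sum_{\u{a}'}m(\u{a}')\le (d-t)q^{n-2}$ on $f_t$ yields $dq^{n-1}$.
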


 \begin{lemma} \label{poly}
 Suppose $\cS \subseteq \F_q^n$ such that for some natural numbers $m,d$ we have
 $$|\cS|{m+n-1 \choose n} < {d + n \choose n}.$$
 Then there is a non-zero polynomial $f \in \F_q[\u{x}]$ of degree at most $d$, such that $\M(f,\u{s}) \geq m$ for all~$\u{s} \in \cS$.
 \end{lemma}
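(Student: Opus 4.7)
The plan is to argue by elementary linear algebra: I would exhibit $f$ as a non-zero solution of a homogeneous linear system with more unknowns than equations. Let $V$ denote the $\F_q$-vector space of polynomials in $\F_q[x_1,\ldots,x_n]$ of total degree at most $d$. A basis of $V$ is given by the monomials $x_1^{j_1}\cdots x_n^{j_n}$ with $j_1+\ldots+j_n\le d$, and a standard stars-and-bars count yields $\dim V=\binom{d+n}{n}$.

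For each $\u{s}\in\cS$ and each multi-index $\u{i}\in\mathbb{N}^n$ with $wt(\u{i})<m$, the map $f\mapsto f^{\u{i}}(\u{s})$ is an $\F_q$-linear functional on $V$. Indeed, the Hasse derivative $f\mapsto f^{\u{i}}$ extracts a coefficient of $f(\u{x}+\u{y})$ viewed as a polynomial in $\u{y}$, hence depends linearly on $f$, and point evaluation is linear as well. By the very definition of multiplicity given in the paper, the condition $\M(f,\u{s})\ge m$ is equivalent to $f$ lying in the common kernel of all such functionals attached to $\u{s}$.

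Another stars-and-bars count shows that the number of $\u{i}\in\mathbb{N}^n$ with $i_1+\ldots+i_n\le m-1$ equals $\binom{m+n-1}{n}$. Hence requiring $\M(f,\u{s})\ge m$ for every $\u{s}\in\cS$ amounts to imposing at most $|\cS|\binom{m+n-1}{n}$ linear conditions on $f\in V$. Under the hypothesis of the lemma this number is strictly smaller than $\dim V=\binom{d+n}{n}$, so the intersection of the associated hyperplanes in $V$ is a non-trivial subspace and therefore contains a non-zero polynomial $f$ of degree at most $d$ with $\M(f,\u{s})\ge m$ at every $\u{s}\in\cS$, as desired.

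There is essentially no serious obstacle: the whole argument is a rank-nullity comparison for a homogeneous linear system. The only points requiring care are the two combinatorial counts and the verification that Hasse derivatives really do yield $\F_q$-linear functionals, both of which follow directly from the definitions recalled just before the lemma.
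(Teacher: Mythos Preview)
Your argument is correct and is exactly the standard proof: the paper does not give its own proof of this lemma but cites \cite{dvkosasu} (see also \cite{ta}), and the dimension-counting linear algebra argument you present is precisely what appears there. Both combinatorial counts and the linearity of the Hasse-derivative-plus-evaluation functionals are handled correctly.
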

Note that Lemma~\ref{poly} is satisfied if $|\cS|\le\left(\frac{d}{m+n}\right)^n$.
 
}

\subsection{Conical Nikodym sets}

In this section we use the method of multiplicities to prove the following theorem.
\begin{theorem}
Let $\cN\subset \F_q^n$ be a conical Nikodym set, with $q$ a power of an odd prime. Then we have
$$|\cN|\ge 
\left(3+\frac{4}{q-2}\right)^{-n}q^n=(3+o(1))^{-n}q^n,\quad q\to \infty.$$
\end{theorem}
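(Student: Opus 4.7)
The plan is to apply the method of multiplicities, in the spirit of \cite{dvkosasu}'s treatment of line-Kakeya sets, adapted to the conical setting. Suppose for contradiction that $|\cN|<(d/(m+n))^n$ for positive integers $m,d$ to be chosen. By Lemma~\ref{poly} there is a nonzero polynomial $f\in\F_q[\u{x}]$ of degree at most $d$ with $\M(f,\u{s})\ge m$ for every $\u{s}\in\cN$; the goal is to force $f\equiv 0$ by showing $f$ has large multiplicity at every point of $\F_q^n$.

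Fix $\u{x}\in\F_q^n$ and let $\cC\ni\u{x}$ be the conic guaranteed by the Nikodym property. For any multi-index $\u{i}$ with $j:=wt(\u{i})<m'$ (for a third integer $m'$ to be chosen), Lemmas~\ref{commute} and \ref{weight} give $\deg(f^{\u{i}})\le d-j$ and $\M(f^{\u{i}},\u{s})\ge m-j$ for all $\u{s}\in\cN$. Restrict $f^{\u{i}}$ to $\cC$ exactly as in Section~\ref{niko}: take $F(t)=f^{\u{i}}(\u{a}+t\u{b}+t^2\u{c})$ for a parabola, $F(t)=t^{d-j}f^{\u{i}}(\u{a}+t\u{b}+t^{-1}\u{c})$ for a hyperbola, and $F(t)=t^{d-j}f^{\u{i}}(\u{a}+\u{b}x(t)+\u{c}y(t))$ for an ellipse (using the parametrisation from Section~\ref{ell}). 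In every case $F$ is a polynomial in $t$ of degree at most $2(d-j)$. Applying Lemma~\ref{comp} (directly in the parabola case, and to a local polynomial expansion around any $t_0\ne 0$ in the hyperbola and ellipse cases), $F$ vanishes to multiplicity at least $m-j$ at each of the $|\cC|-1$ parameter values mapping into $\cC\setminus\{\u{x}\}\subseteq\cN$. Since $|\cC|-1$ equals $q-1,q-2,$ or $q$, with minimum $q-2$ coming from the hyperbola, the worst-case sufficient condition $(m-j)(q-2)>2(d-j)$ forces $F\equiv 0$ and hence $f^{\u{i}}(\u{x})=0$. Running $\u{i}$ through all multi-indices of weight below $m'$ yields $\M(f,\u{x})\ge m'$.

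Summing over $\F_q^n$, Lemma~\ref{vanish} delivers $f\equiv 0$ (the desired contradiction) as soon as $m'q>d$. It remains to choose positive integers $m,m',d$ satisfying (i) the worst-case conic inequality $(m-m'+1)(q-2)>2(d-m'+1)$ at $j=m'-1$, (ii) the vanishing inequality $m'q>d$, and (iii) $d/(m+n)$ as large as possible. Balancing (i) and (ii) and letting $m\to\infty$ drives $d/(m+n)$ up to a limit of at least $q(q-2)/(3q-2)=q/(3+4/(q-2))$, which is the value asserted in the theorem.

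The main obstacle I expect is the integer-parameter bookkeeping at the end: one must pick $m,m',d$ explicitly (e.g.\ $m$ a suitable multiple chosen to clear denominators, and then $m'$ and $d$ tuned accordingly) so that both (i) and (ii) hold as strict inequalities with $d/(m+n)$ arbitrarily close to the target constant. A minor subsidiary verification is that the parabola constraint $(m-j)(q-1)>2(d-j)$ and the ellipse constraint $(m-j)q>2(d-j)$ both follow from (i), which is immediate since $q-1,q\ge q-2$; one should also note that although the ellipse restriction produces $F\in\F_{q^2}[t]$, identical vanishing there still forces $f^{\u{i}}(\u{x})=0$ in $\F_q$ because $t_{\u{x}}^{d-j}\ne 0$.
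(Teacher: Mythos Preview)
Your proposal is correct and follows essentially the same approach as the paper's own proof: the paper also applies the multiplicity method from \cite{dvkosasu}, picking parameters $l$ (your $m'$), $m=\lfloor(3+4/(q-2))l\rfloor$, and $d<lq$, then for each $\u{i}$ of weight below $l$ restricts $f^{\u{i}}$ to the conic through $\u{x}$, uses Lemmas~\ref{weight}, \ref{comp} and \ref{vanish} to kill the restricted one-variable polynomial, and finally invokes Lemma~\ref{vanish} globally before letting $l\to\infty$. The only cosmetic difference is that the paper verifies the parabola, hyperbola, and ellipse inequalities separately rather than singling out the hyperbola bound $(m-j)(q-2)>2(d-j)$ as the bottleneck, and it fixes the parameters up front instead of optimizing them at the end.
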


 \begin{proof}
We begin by taking a large multiple of $q$, call it $lq$ for some positive integer $l$, and define 
$$m=\left\lfloor \left(3+\frac{4}{q-2}\right)l\right\rfloor.$$
 
 Assume that 
 $|\cN| \le \left(\frac{lq-1 }{m + n}\right)^n$. By Lemma \ref{poly}, there is a non-zero polynomial $f \in \F_q[\u{x}]$ of degree $d < lq$, such that $\M(f, \u{x}) \geq m$ for all $\u{x} \in \cN$. Let $f_d(\u{x})$ be the homogeneous part of $f$ of degree $d$, which we know is not the zero polynomial. We aim to show that $f_d$ has high multiplicity everywhere in $\F_q^n$, and thus must be the zero polynomial. Indeed, we will show it has multiplicity $l$ everywhere.
 
 Choose $\u{i} \in \mathbb{N}^n$ with $wt(\u{i}) < l$, and $\u{z} \in \F_q^n$. We aim to show that $(f_d)^{\u{i}}(\u{z}) = 0$. The case $\u{z} = \u{0}$ is trivial, so we assume  $\u{z} \not= \u{0}$. As $\cN$ is conical Nikodym, there is a conic $\cC$ such that $\u{z} \in \cC$ and $\cP \setminus \{ \u{z} \} \subset \cN$. We split into cases depending on the conic $\cC$, aiming to show that $f^{\u{i}}(\u{z}) = 0$.
 
 \underline{\emph{Case 1 - Parabola}}
 
 Assume $\cC$ is a parabola, which we can parametrise as $\u{c}t^2 + \u{b}t + \u{a}$. We know by the properties of $f$ that $\M(f,\u{c}t^2 + \u{b}t + \u{a}) \geq m$ for $q-1$ values of $t$. By Lemma \ref{weight}, we have $\M(f^{\u{i}},\u{c}t^2 + \u{b}t + \u{a}) \geq m - wt(\u{i})$. We can now use Lemma \ref{comp} to get 
$$\M(f^{\u{i}}(\u{c}x^2 + \u{b}x + \u{a}), t) \geq \M(f^{\u{i}},\u{c}t^2 + \u{b}t + \u{a}) \geq m - wt(\u{i})$$
for $q-1$ values of $t$. Note that the polynomial $f^{\u{i}}(\u{c}x^2 + \u{b}x + \u{a})$ has degree $d' \leq 2\deg(f^{\u{i}}) \leq 2(d - wt(\u{i}))$. However, $f^{\u{i}}(\u{c}x^2 + \u{b}x + \u{a})$ has multiplicity at least $m - wt(\u{i})$ for $q-1$ values of $t$, so that by Lemma \ref{vanish}, as $d < lq$, $wt(\u{i}) < l$, we have 
\begin{align*}
    \sum_{t \in \F_q} \M(f^{\u{i}}(\u{c}x^2 + \u{b}x + \u{a}), t)  & \geq (q-1)(m - wt(\u{i})) \\
    & > 2(d - wt(\u{i})) \\
    & \geq \deg(f^{\u{i}}(\u{c}x^2 + \u{b}x + \u{a}))
\end{align*}
so that $f^{\u{i}}(\u{c}x^2 + \u{b}x + \u{a})$ is in fact the zero polynomial. But then $f^{\u{i}}(\u{z}) = 0$, as needed.

 \underline{\emph{Case 2 - Hyperbola}}
 
 Assume $\cC$ is a hyperbola, which we can parametrise as $\u{b}t + \u{c}t^{-1} + \u{a}$. We know by the properties of $f$ that $\M(f,\u{b}t + \u{c}t^{-1} + \u{a}) \geq m$ for $q-2$ values of $t$. By Lemma \ref{weight}, we have $\M(f^{\u{i}},\u{b}t + \u{c}t^{-1} + \u{a}) \geq m - wt(\u{i})$. We have $ d'=\deg(f^{\u{i}})  \leq d - wt(\u{i})$, and we define the polynomial $F(t) = t^{d'}f^{\u{i}}(\u{b}t + \u{c}t^{-1} + \u{a})$ which has degree $2d'$, and also has multiplicity at least $m- wt(\u{i})$ for $q-2$ values of $t$. By the vanishing lemma, we have
 $$\sum_{t \in \F_q} \M(F,t) \geq (q-2)(m - wt(\u{i})) > 2(d - wt(\u{i}))$$
so that $F(t)$ is the zero polynomial. In particular, when we input the value $t_0 \neq 0$ corresponding to $\u{z}$ on the hyperbola, we get zero. Then $F(t_0) = t_0^{d'}f^{\u{i}}(\u{b}t_0 + \u{c}t_0^{-1} + \u{a}) = t_0^{d'}f^{\u{i}}(\u{z}) = 0 \implies f^{\u{i}}(\u{z}) = 0  $ as needed.

 \underline{\emph{Case 3 - Ellipse}}

Assume $\cC$ is an ellipse, which we can parametrise as $\u{b}x(t) + \u{c}y(t) + \u{a}$ with $\u{b}$ and $\u{c}$ linearly independent. We know by the properties of $f$ that $\M(f,\u{b}x(t) + \u{c}y(t) + \u{a}) \geq m$ for $q$ values of $t$. By Lemma \ref{weight}, we have $\M(f^{\u{i}},\u{b}x(t) + \u{c}y(t) + \u{a}) \geq m - wt(\u{i})$. We have $d'= \deg(f^{\u{i}})  \leq d - wt(\u{i})$, and we define the polynomial $F(t) = t^{d'}f^{\u{i}}(\u{b}x(t) + \u{c}y(t) + \u{a})$ which has degree $2d'$, and also has multiplicity at least $m- wt(\u{i})$ for $q$ values of $t \in \F_{q^2}$. By the vanishing lemma, we have
 $$\sum_{t \in \F_{q^2}} \M(F,t) \geq q(m - wt(\u{i})) > 2(d - wt(\u{i}))$$
 so that $F(t)$ is the zero polynomial. In particular, when we input the value $t_0 \neq 0$ corresponding to $\u{z}$ on the ellipse, we get zero. Then $F(t_0) = t_0^{d'}f^{\u{i}}(\u{b}x(t) + \u{c}y(t) + \u{a}) = t_0^{d'}f^{\u{i}}(\u{z}) = 0 \implies f^{\u{i}}(\u{z}) = 0  $ as needed.

This was for arbitrary $\u{z}$, so we have $\M(f,\u{z}) \geq l$ for all $\u{z} \in \F_q^n$, and we  may use the vanishing lemma a final time to show
$$\sum_{\u{x} \in \F_q^n}\M(f, \u{x}) \geq lq^n > dq^{n-1}$$
so $f$ is in fact the zero polynomial, a contradiction. We may allow $l$ to go to infinity, so that 
$$|\cN| \geq \lim_{l \rightarrow \infty}\left(\frac{lq-1   }{m + n}\right)^n \geq \lim_{l \rightarrow \infty} \left( \frac{q-1/l }{3+4/(q-2)+ n/l} \right)^n= \left(\frac{q}{3+4/(q-2)}\right)^n $$
as needed.
 \end{proof}

\tbd{
\subsection{Conical Kakeya sets}

In this section we adapt the proof of \cite{dvkosasu} for line Kakeya sets to conical Kakeya sets.

\begin{theorem}\label{parabolicKakeya}
Let $\cK\subset \F_q^n$ be a conical Kakeya set, with odd q. Then we have
$$|\cK|\ge \left(\frac{q}{3}\right)^n.$$
\end{theorem}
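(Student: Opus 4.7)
The plan is to combine the method of multiplicities used in the previous subsection with the ``top homogeneous part'' trick from the ordinary conical Kakeya argument of Section~\ref{kake}. A direction $\u{z}$ does not lie on the conic guaranteed by the Kakeya hypothesis, so one cannot directly extract vanishing of $f^{\u{i}}$ at $\u{z}$; instead I will extract the appropriate extremal coefficient of a univariate composition polynomial and use Lemma~\ref{commute} to identify it with $(f_d)^{\u{i}}(\u{z})$, where $f_d$ is the top homogeneous part of $f$.

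Fix an integer $l\ge 1$ and set $m=3l$, $d=lq-1$. Assuming for contradiction $|\cK|<\bigl((lq-1)/(3l+n)\bigr)^n$, Lemma~\ref{poly} produces a non-zero polynomial $f\in\F_q[\u{x}]$ of degree at most $d$ with $\M(f,\u{s})\ge m$ for every $\u{s}\in\cK$. Let $f_d$ denote its top homogeneous part. The aim is to prove $\M(f_d,\u{z})\ge l$ for every $\u{z}\in\F_q^n$: this is automatic at $\u{z}=\u{0}$ because $f_d$ is homogeneous of degree $d\ge l$, and once the claim is established, Lemma~\ref{vanish} applied to $f_d$ gives $\sum_{\u{x}\in\F_q^n}\M(f_d,\u{x})\ge lq^n>dq^{n-1}$, forcing $f_d\equiv 0$ and thus a contradiction.

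For a non-zero $\u{z}$ and an index $\u{i}$ with $wt(\u{i})<l$, I would invoke the conical Kakeya property to obtain either a parabola $\u{a}+t\u{b}+t^2\u{z}\subset\cK$ or, after relabelling $t\mapsto t^{-1}$ if necessary, a hyperbola $\u{a}+t\u{b}+t^{-1}\u{z}\subset\cK$ with $\u{z}$ as one of its direction vectors. Setting $d'=\deg(f^{\u{i}})\le d-wt(\u{i})$ and forming $G(t)=f^{\u{i}}(\u{a}+t\u{b}+t^2\u{z})$ in the parabolic case, or $G(t)=t^{d'}f^{\u{i}}(\u{a}+t\u{b}+t^{-1}\u{z})$ in the hyperbolic case, yields a univariate polynomial of degree at most $2d'$. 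Lemmas~\ref{weight} and~\ref{comp} give $\M(G,t)\ge m-wt(\u{i})$ at each of the $q$, respectively $q-1$, parameter values, and Lemma~\ref{vanish} forces $G\equiv 0$ provided $(q-1)(m-wt(\u{i}))>2(d-wt(\u{i}))$. The coefficient of $t^{2d'}$ (parabola) or $t^{0}$ (hyperbola) in $G$ is exactly $(f^{\u{i}})_{d'}(\u{z})$, which by Lemma~\ref{commute} coincides with $(f_d)^{\u{i}}(\u{z})$, giving the desired vanishing.

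The main numerical obstacle is verifying the parameter inequality uniformly in $wt(\u{i})\in\{0,\ldots,l-1\}$, where the hyperbolic $(q-1)$-count is tighter than the parabolic $q$-count and so dictates the constant: a short manipulation shows that with $m=3l$ and $d=lq-1$ the condition reduces to $(l-wt(\u{i}))(q-3)+2>0$, which holds for every odd $q\ge 3$. Since any $|\cK|<(q/3)^n$ satisfies $|\cK|<((lq-1)/(3l+n))^n$ once $l$ is sufficiently large, letting $l\to\infty$ upgrades the conclusion to $|\cK|\ge(q/3)^n$, matching the statement of the theorem. Structurally, the whole argument mirrors the conical Nikodym proof of the previous subsection, with ``point on the conic'' replaced by ``direction of the conic'' and $f$ replaced by $f_d$.
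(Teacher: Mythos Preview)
Your proposal is correct and follows essentially the same route as the paper's own proof: both obtain a high-multiplicity polynomial $f$ via Lemma~\ref{poly}, show that every Hasse derivative $(f_d)^{\u i}$ with $wt(\u i)<l$ vanishes at each direction $\u z$ by forcing a univariate composition polynomial to be identically zero and reading off an extremal coefficient via Lemma~\ref{commute}, and finish with Lemma~\ref{vanish} and $l\to\infty$. The only cosmetic differences are that you place the direction on the $t^{-1}$ factor of the hyperbola and extract the constant term (the paper places it on the $t$ factor and extracts the leading term), and you verify the key inequality $(q-1)(m-wt(\u i))>2(d-wt(\u i))$ explicitly as $(l-wt(\u i))(q-3)+2>0$, which the paper asserts without computation.
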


 \begin{proof}
 We begin by taking a large multiple of $q$, call it $lq$, and define $m = 3l$.
 
 Assume that $|\cK| \le \left(\frac{lq -1  }{m + n}\right)^n$. By Lemma \ref{poly}, there is a non-zero polynomial $f \in \F_q[\u{x}]$ of degree $d < lq$, such that $\M(f, \u{k}) \geq m$ for all $\u{k} \in \cK$. Let $f_d$ denote the homogeneous part of $f$ with highest degree $d$. We will show that this polynomial has multiplicity $l$ everywhere, so that $f$ must be the zero polynomial.
 
 Let $\u{c} \in \F_q^n$ be arbitrary and non-zero (the zero case is trivial), and take $\u{i} \in \mathbb{N}^n$ with $wt(\u{i}) < l$. As $\cK$ is conical Kakeya, there is either a parabola, hyperbola or an ellipse with direction $\u{c}$ contained in $\cK$. We split into cases, with the aim to show $(f^{\u{i}})_d(\u{c}) = 0$. 
 
 \u{\emph{Case 1 - Parabola}}
Assume there is a parabola of the form $\u{c}t^2 + \u{b}t + \u{a}$ contained in $K$. We know by the properties of $f$ that $\M(f,\u{c}t^2 + \u{b}t + \u{a}) \geq m$ for $t \in \F_q$. By Lemma \ref{weight}, we have $\M(f^{\u{i}},\u{c}t^2 + \u{b}t + \u{a}) \geq m - wt(\u{i})$. We can now use Lemma \ref{comp} to get 
$$\M(f^{\u{i}}(\u{c}t^2 + \u{b}t + \u{a}), t) \geq \M(f^{\u{i}},\u{c}t^2 + \u{b}t + \u{a}) \geq m - wt(\u{i}).$$
Note that the polynomial $f^{\u{i}}(\u{c}t^2 + \u{b}t + \u{a})$ has degree $d' \leq 2\deg(f^{\u{i}}) \leq 2(\deg(f) - wt(\u{i})) = 2(d - wt(\u{i}))$. However, $f^{\u{i}}(\u{c}t^2 + \u{b}t + \u{a})$ has multiplicity at least $m - wt(\u{i})$ everywhere in $\F_q$, so that by Lemma \ref{vanish}, as $d < lq$, $wt(\u{i}) < l$, we have 
\begin{align*}
    \sum_{t \in \F_q} \M(f^{\u{i}}(\u{c}t^2 + \u{b}t + \u{a}), t) & \geq q(m - wt(\u{i})) \\
    & > 2(d - wt(\u{i})) \\
    & \geq \deg(f^{\u{i}}(\u{c}t^2 + \u{b}t + \u{a}))
\end{align*}
so that $f^{\u{i}}(\u{c}t^2 + \u{b}t + \u{a})$ is in fact the zero polynomial.

The next observation is crucial; the coefficient of $x^{2\deg(f^{\u{i}})}$ in $f^{\u{i}}(\u{c}t^2 + \u{b}t + \u{a})$ is precisely $(f^{\u{i}})_{d'}(\u{c})$, as only this highest degree homogeneous part could reach the highest power of $x$. But then by Lemma \ref{commute}, we have
$$(f_d)^{\u{i}}(\u{c}) = (f^{\u{i}})_{d'}(\u{c}) = 0 $$
as needed.
 
 \u{\emph{Case 2 - Hyperbola}}
  Up to a relabelling of $t \rightarrow t^{-1}$, we may parametrise the hyperbola as $\u{c}t + \u{b}t^{-1} + \u{a}$. As the polynomial $f(\u{x})$ has multiplicity $m$ everywhere in $\cK$, $\M(f,\u{c}t + \u{b}t^{-1} + \u{a}) \geq m$ for $t \in \F_q^*$. We then have that $\M(f^{\u{i}},\u{c}t + \u{b}t^{-1} + \u{a}) \geq m - wt(\u{i})$ for $t \in \F_q^*$. Let $d'$ denote the degree of $f^{\u{i}}(\u{x})$. We have $d' \leq d - wt(\u{i})$, and we define the polynomial $F(t) = t^{d'}f^{\u{i}}(\u{c}t + \u{b}t^{-1} + \u{a})$. Note that $F(t)$ has multiplicity at least $m - wt(\u{i})$ for all $t \in \F_q^*$, so that by the vanishing lemma,
  $$\sum_{t \in F_q}\M(F,t) \geq (q-1)(m - wt(\u{i})) > 2(d - wt(\u{i})) \geq 2d' = \deg(F).$$
  Therefore $F(t)$ is the zero polynomial. In particular, its highest degree term is zero. The coefficient of $t^{2d'}$ in $F(t)$ is precisely $(f^{\u{i}})_{d'}(\u{c})$. By Lemma \ref{commute}, we have $(f^{\u{i}})_{d'}(\u{c}) = (f_{d})^{\u{i}}(\u{c}) = 0$, as needed.

  We now have that $\M(f_d, \u{a}) \geq l$ for all $\u{a} \in \F_q^n$. We may now use Lemma \ref{vanish} to show
$$\sum_{\u{a} \in \F_q^n}\M(f_d, \u{a}) \geq lq^n > dq^{n-1}$$
so that $f_d$ is the zero polynomial, a contradiction. We therefore must have $|\cK| \geq \left(\frac{lq -1  }{m + n}\right)^n$. As $l$ was an arbitrary large integer, we may allow $l \rightarrow \infty$, so we have
$$|\cK| \geq \lim_{l \rightarrow \infty}\left(\frac{lq -1  }{m + n}\right)^n = \lim_{l \rightarrow \infty} \left( \frac{q - 1/l }{3 + n/l} \right)^n= \left(\frac{q}{3}\right)^n $$
as needed.
 \end{proof}
 }
\section{Final remarks} \label{remarks}

\begin{itemize}
\tbd{
\item For line Kakeya sets Dvir gave a construction of size at most 
$$2^{1-n}q^n+O(q^{n-1}),$$ 
see \cite[Theorem 7]{sasu}. This construction can easily be adjusted to conical Kakeya sets. However, we lose a factor~$2$. We explain this for parabolae. For hyperbolae and ellipses one can deal analogously.
Since otherwise our result is trivial we assume $n\ge 3$. For any direction $\underline{c}=(c_1,\ldots,c_n)\ne \underline{0}$ we take $\underline{b}=(b_1,\ldots,b_n)$ with $b_1=1$ and $b_i=0$ for $i=2,\ldots,n$ if $c_n\ne 0$ and $(b_1,\ldots,b_{n-1},0)$ any vector which is linearly independent to $\underline{c}$ if $c_n=0$.
We also take $\underline{a}=(a_1,\ldots,a_n)$ with $a_n=0$. Then for $c_n = 0$ the parabola 
$\underline{a}+t\underline{b}+t^2\underline{c}$ lies in $\F_q^{n-1}\times\{0\}$ which contains $q^{n-1}$ points.
For $c_n\ne 0$ choose $b_1=1$ and $b_i=0$ for $i=2,\ldots,n$ and note that $\underline{b}$ and $\underline{c}$ are linearly independent. Choosing $a_i=c_i^2(2c_n)^{-2}$ for $i=2,\ldots,n-1$ we see that 
$\underline{a}+t\underline{b}+t^2\underline{c}$ is of the form $(\alpha_1,\ldots,\alpha_n)$
with $\alpha_i+\alpha_n^2=a_i+t^2c_i+t^4c_n^2=(t^2c_n+c_i(2c_n)^{-1})^2$ for $i=2,\ldots,n-1$ by the choice of $a_i$. Hence, the parabola lies in the set $\{(\alpha_1,\ldots,\alpha_n): \alpha_i+\alpha_n^2 \mbox{ is a square for }i=2,\ldots,n-1 \mbox{ and } \alpha_n\ne 0\}$. We have $q-1$ choices for $\alpha_n$, $q$ for $\alpha_1$ and $(q+1)/2$ for each $\alpha_i$ with $i=2,\ldots,n-1$. Hence, the size of our conical Kakeya set is at most
$$q\left(\frac{q+1}{2}\right)^{n-2}(q-1)+q^{n-1}=2^{2-n}q^n+O(q^{n-1}).$$
}

\tbd{\item In \cite[Definition 6]{bjkawi}, the authors introduced {\em Kakeya sets of degree $r$} which coincide 
with line Kakeya sets if $r=1$. For $r=2$ this definition differs from our definition of 
parabolic Kakeya sets by the condition that $\u{b}$ and $\u{c}$ are allowed to be linearly dependent.
If $q\equiv 1 \bmod (r+1)$, in Lemma~7 they also give constructions of size at most $\left(\frac{q-1}{r+1}+1\right)^{n+1}$.
For $r=2$ the construction is
$$\cK=\left\{\left(\left(\frac{c_i}{3}+t\right)^3-t^3\right)_{i=1}^n : c_1,\ldots,c_n, t\in \F_q\right\},\quad q\equiv 1\bmod 3.$$
However, to satisfy the linear independence condition we have to add lines for the directions
for which $(c_1,\ldots,c_n)$ and $(c_1^2,\ldots,c_n^2)$ are linearly dependent, that is, $(c_1,\ldots,c_n)\in \{0,c\}^n$ for some $c\in \F_q^*$ and we have to add $O(q^2)$ further vectors, that is, we have the upper bound
$$\left(\frac{q+2}{3}\right)^{n+1}+O(q^2).$$
It is not difficult to extend Theorem~\ref{parabolicKakeya} to such Kakeya sets  of degree $r$ (with a linear independence condition) giving the lower bound $\left(\frac{q}{r+1}\right)^n$. 
(Without the linear independence condition we can get only a weaker lower bound 
since the polynomial curves may contain only $\lceil q/r\rceil$ points.) }

\item  For line Nikodym sets a lower bound $(1-o(1))q^n$ is given in \cite{gukosu} where the implied constant is independent of $n$ but depends on the characteristic of $\F_q$.

\item Improved lower bounds on (line) Kakeya and Nikodym sets in $\F_q^3$ are given in \cite{lu}.
\tbd{In particular it is shown that a construction for Nikodym sets in $\F_q^3$ of size $\frac{q^3}{4}+O(q^2)$
cannot exist and Nikodym sets behave differently than Kakeya sets where we have such a construction, see our first remark.}

\item Modular conics, in particular hyperbolae, are well-studied objects. For a survey on modular hyperbolae see \cite{sh}.

\item The proofs of the lower bounds for the size of finite field Kakeya and Nikodym sets were inspired by ideas from coding theory, see for example \cite[Chapter 4]{gu} and \cite{ye}, more precisely from decoding Reed-Muller codes. The crucial idea is that a single missing value of a polynomial (of sufficiently small degree) on a line can be recovered. Similarly one can design decoding algorithms using non-degenerate conics instead of lines, see \cite[Lemma~2.6]{ye} for parabolae.

\item The following \tbd{example shows for ellipses we can neither take $\u{b}$ nor $\u{c}$ as a direction
to define elliptic Kakeya sets and prove a lower bound of order of magnitude $q^n$.
We take} $n=2$, $q\equiv 3\bmod 4$ and $q\ge 19$. 
Note that $q\equiv 3\bmod 4$ if and only if $-1$ is a non-square in $\F_q^*$, that is, for any non-square $g$ in $\F_q^*$ the element
$-g$ is a square in $\F_q^*$ and let $r\in \F_q^*$ be a square-root of $-g$, $r^2=-g$. 
Moreover, verify that $\F_q^2\setminus\{\underline{0}\}=\{(c_1r^{-1},c_2): (c_1,c_2)\in \F_q^2\setminus\{\underline{0}\} \ \}.$
Then set
$$\cK=\{x(-c_2r^{-1},c_1)+y(c_1r^{-1},c_2) : y^2=-x^2+(c_1^2+c_2^2)^{-1}, (c_1,c_2) \in \F_q^2 \setminus \{\underline{0}\} \}$$
which defines only one ellipse $\cE=\{(x,y) : y^2=gx^2+1\}$ with $q+1<\left(\frac{q-1}{4}\right)^2$ points by \cite[Lemma~6.24]{lini} and since $q\ge 19$.
Note that $c_1^2+c_2^2=0$ with $(c_1,c_2)\ne (0,0)$ is not possible since $-1$ is a non-square in $\F_q$ for $q\equiv 3\bmod 4$. 
This example explains why we did not include the case of ellipses into the definition of conical Kakeya sets.
\end{itemize}

\section*{Acknowledgment}
The authors are supported by the Austrian Science Fund FWF Project P~30405-N32.\\
\tbd{We would like to thank the anonymous referees for their valuable comments, in particular for pointing to~\cite{elobta} and \cite{bjkawi}.}

\end{document}